\documentclass{amsart}
\usepackage[all]{xy}

\theoremstyle{plain}
\newtheorem{theorem}{Theorem}
\newtheorem{proposition}{Proposition}

\theoremstyle{definition}
\newtheorem{definition}{Definition}
\newtheorem{example}{Example}

\newcommand{\R}{{\mathbb R}}

\begin{document}

\title[Encouraging the grand coalition]{Encouraging the grand coalition in \\convex cooperative games}

\author{Titu Andreescu}

\address{The University of Texas at Dallas
Science/Mathematics Education Department, PO Box 830688 Mail Station
FN33, Richardson TX 75083-0688, USA}

\author{Zoran \v{S}uni\'c}

\thanks{The second author is partially supported by NSF grant DMS-0600975}

\address{Department of Mathematics, Texas A\&M University, College Station, TX 77843-3368, USA}

\keywords{transferable utility games, convex games, cooperative
games, Shapley value, $\tau$-value, grand coalition,}

\subjclass[2000]{91B32, 91B08, 91A12}

\begin{abstract}
A solution function for convex transferable utility games encourages
the grand coalition if no player prefers (in a precise sense defined
in the text) any coalition to the grand coalition. We show that the
Shapley value encourages the grand coalition in all convex games and
the $\tau$-value encourages the grand coalitions in convex games up
to three (but not more than three) players. Solution functions that
encourage the grand coalition in convex games always produce
allocations in the core, but the converse is not necessarily true.
\end{abstract}

\maketitle


\section{Cooperative games}

We begin by recalling the main concepts and their basic properties.
The notation mostly follows~\cite{curiel:b-games}
and/or~\cite{branzei-d-t:b-games}.

Let $N=\{1,\dots,n\}$. The elements of $N$ are called
\emph{players}, its subsets are called \emph{coalitions}, and the
set $N$ is called the \emph{grand coalition}. A \emph{cooperative
transferable utility game} with $n$-players is a function $v: 2^N
\to \R$ such that $v(\emptyset)=0$, where $2^N$ is the set of all
subsets of $N$.

For a given game $v$, we often denote $v\left(\{i\}\right)$ by
$v(i)$ or $v_i$. More generally, for any function $x:N \to \R$ and
$i \in N$, we denote $x(i) = x_i$. Thus we (sometimes) think of
functions $x:N \to \R$ as vectors in $\R^n$.  For a function $x: N
\to \R$ and a coalition $A \subseteq N$, we write $x(A) = \sum_{j
\in A}x_j$.

A game $v:2^N \to \R$ is called \emph{super-additive} if, for all
disjoint coalitions $A,B \subseteq N$,
\[ v(A) + v(B) \leq  v(A \cup B) \]
and is called \emph{convex} if, for all coalitions $A,B \subseteq
N$,
\[ v(A) + v(B) \leq  v(A \cup B) + v(A \cap B). \]

\begin{example}\label{e:4-player}
Define a 4-player game $v$ on $N=\{1,2,3,4\}$ by the diagram in
Figure~\ref{f:4-player} (the value of each coalition is provided at
the vertex representing the coalition).
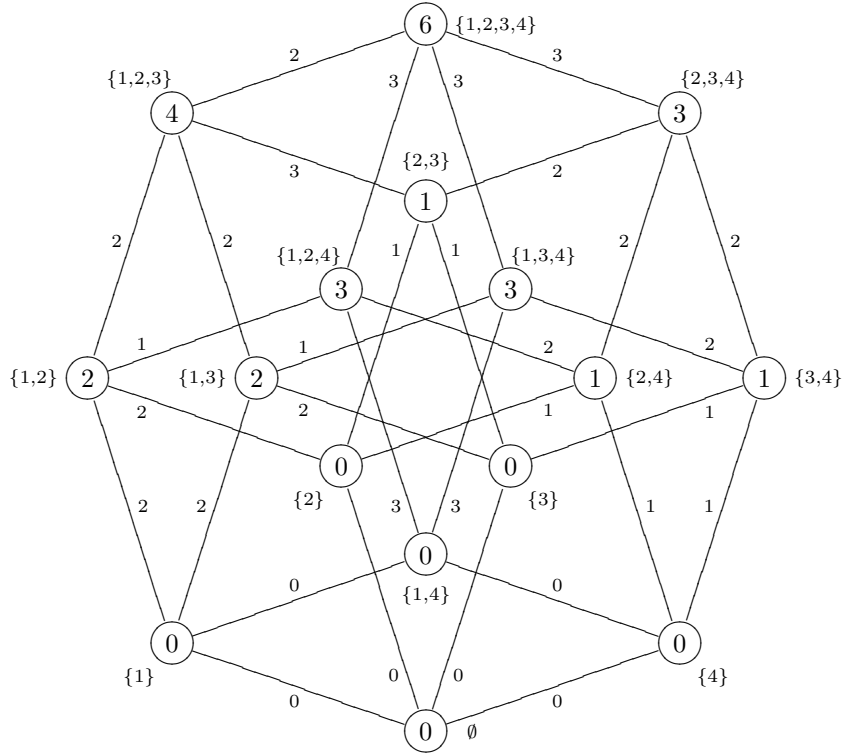
\begin{figure}[!ht]
\[
\xymatrix@C=15pt@R=15pt{
 &&&&& *++[o][F-]{6} \ar@{-}[dlll]_{2} \ar@{-}[drrr]^{3} \ar@{}[r]|<<<<<<<{\{1,2,3,4\}}
                                 \ar@{-}[dddl]_<<<<<<{3} \ar@{-}[dddr]^<<<<<<{3} &&&&
 \\
 &&*++[o][F-]{4} \ar@{-}[dddl]_{2} \ar@{-}[dddr]^{2} \ar@{-}[drrr]_{3} \ar@{}[ul]|<<<<{\{1,2,3\}}&&&&&&
  *++[o][F-]{3} \ar@{-}[dddr]^{2} \ar@{-}[dddl]_{2} \ar@{-}[dlll]^{2} \ar@{}[ur]|<<<<{\{2,3,4\}}&
 \\
 &&&&& *++[o][F-]{1} \ar@{-}[dddl]_<<<<<{1} \ar@{-}[dddr]^<<<<<{1} \ar@{}[u]|<<<{\{2,3\}}&&&&
 \\
 &&&& *++[o][F-]{3} \ar@{-}[dlll]_>>>>>>{1} \ar@{-}[drrr]^>>>>>{2} \ar@{-}[dddr]_>>>>>{3} \ar@{}[ul]|<<<<{\{1,2,4\}}&&
     *++[o][F-]{3} \ar@{-}[dlll]_>>>>>{1} \ar@{-}[drrr]^>>>>>>{2} \ar@{-}[dddl]^>>>>>{3} \ar@{}[ur]|<<<<{\{1,3,4\}}&&&
 \\
 &*++[o][F-]{2} \ar@{-}[dddr]^{2} \ar@{-}[drrr]_<<<<<<{2} \ar@{}[l]|<<<<<{\{1,2\}}&&
 *++[o][F-]{2} \ar@{-}[dddl]_{2} \ar@{-}[drrr]_<<<<<{2} \ar@{}[l]|<<<<<{\{1,3\}} &&&&
 *++[o][F-]{1} \ar@{-}[dlll]^<<<<<{1} \ar@{-}[dddr]^{1} \ar@{}[r]|<<<<<{\{2,4\}}&&
 *++[o][F-]{1} \ar@{-}[dlll]^<<<<<<{1} \ar@{-}[dddl]_{1}
 \ar@{}[r]|<<<<<{\{3,4\}} &
 \\
 &&&& *++[o][F-]{0} \ar@{-}[dddr]_>>>>>>{0} \ar@{}[dl]|<<<<{\{2\}}&&
     *++[o][F-]{0} \ar@{-}[dddl]^>>>>>>{0} \ar@{}[dr]|<<<<{\{3\}}&&&
 \\
 &&&&& *++[o][F-]{0} \ar@{-}[dlll]_{0} \ar@{-}[drrr]^{0} \ar@{}[d]|<<<{\{1,4\}}&&&&
 \\
 && *++[o][F-]{0} \ar@{-}[drrr]_{0} \ar@{}[dl]|<<<<{\{1\}} &&&&&&
   *++[o][F-]{0} \ar@{-}[dlll]^{0} \ar@{}[dr]|<<<<{\{4\}}&
 \\
 &&&&&*++[o][F-]{0} \ar@{}[r]|<<<<{\emptyset}&&&&
}
\]
\caption{A convex 4-player game}\label{f:4-player}
\end{figure}
The same game is given in a tabular form in Table~\ref{t:4-player}.
\begin{table}[!ht]
\[
\begin{array}{cc|cc|cc|cc}
 A & v(A) & A & v(A) & A & v(A) & A & v(A) \\
 \hline
 \{1\} & 0 & \{2\} & 0 & \{3\} & 0 & \{4\} & 0 \\
 \{1,2\} & 2 & \{1,3\} & 2 & \{1,4\} & 0 & \{2,3\} & 1 \\
 \{2,4\} & 1 & \{3,4\} & 1 & \{1,2,3\} & 4 & \{1,2,4\} & 3 \\
 \{1,3,4\} & 3 & \{2,3,4\} & 3 & N & 6 & \emptyset & 0
\end{array}
\]
\caption{A convex 4-player game}\label{t:4-player}
\end{table}

It is straightforward to check that the game $v$ is convex.
\end{example}

A way to interpret cooperative games is as follows. Assume that the
players in the set $N$ can form various coalitions each of which has
value prescribed by $v$ (say $v(A)$ represents the amount the
coalition $A$ can earn by cooperating). The super-additivity
condition implies that ``the whole is larger than the sum of its
parts'', i.e., forming larger coalitions positively affects the
value. The convexity condition is just a stronger form of the
super-additivity condition. It says that it is more (or at least
equally) beneficial to add a coalition to a larger coalition than to
a smaller one.

Assume that $i$ is not a member of some coalition $A$. The
\emph{marginal contribution} $m_i(A)$ of $i$ to the coalition $A$ is
the quantity
\[ m_i(A) = v(A \cup i ) - v(A), \]
where $A \cup i$ denotes the coalition $A  \cup \{i\}$. Therefore,
the marginal contribution of $i$ to $A$ measures the added value
obtained by bringing player $i$ into the coalition $A$.

A game is convex if and only if, for every player $i$, and all
coalitions $A \subseteq B$ that do not contain $i$,
\[ m_i(A) \leq m_i(B), \]
i.e., it is more beneficial to add a player to a larger coalition
than to a smaller one (this is a well known fact; see for
instance~\cite[Theorem~1.4.2]{curiel:b-games}
or~\cite[Theorem~4.9]{branzei-d-t:b-games}).

\begin{example}
The marginal contributions in the game from Example~\ref{e:4-player}
are written on the edges of the lattice of coalitions. For instance,
the fact that $m_2(\{1,3\}) = v(\{1,2,3\}) - v(\{1,3\}) = 4-2 =2$ is
indicated by the label 2 on the edge between $\{1,3\}$ and $\{1,3\}
\cup\{2\} = \{1,2,3\}$.
\end{example}

The top marginal contributions $m_1(N-\{1\}), \dots, m_n(N-\{n\})$
are often denoted by $m_1,\dots,m_n$. Further, we denote
\[
 M = \sum_{i \in N} m_i, \qquad
 T = v(N), \qquad
 V = \sum_{i \in N} v_i.
\]
Note that, in a convex game, $M \geq T \geq V$.

\begin{example}\label{e:3-player}
We provide a diagram for a general example of a game on three
players. The marginal contributions are indicated on the edges, Note
that, for $i,j \in N=\{1,2,3\}$, $m_i(\emptyset) = v_i$, and
whenever $i \neq j$, we denote $m_i\left(\{j\}\right) = m_{ij}$.
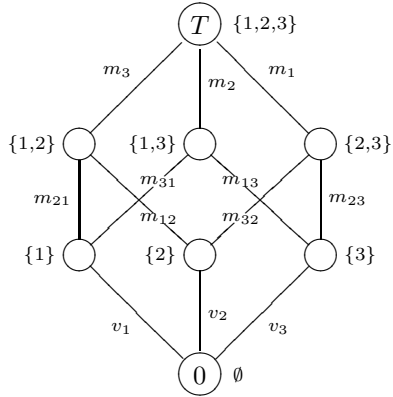
\begin{figure}[!ht]
\[
\xymatrix@C=30pt@R=30pt{
 &&*++[o][F-]{T} \ar@{-}[dl]_{m_3} \ar@{-}[d]^{m_2}
                 \ar@{-}[dr]^{m_1} \ar@{}[r]|<<<<<<{\{1,2,3\}} &&
 \\
 &*++[o][F-]{} \ar@{-}[d]_{m_{21}} \ar@{-}[dr]|>>>>>>{m_{12}} \ar@{}[l]|<<<<<{\{1,2\}}
 &*++[o][F-]{} \ar@{-}[dl]|<<<<<<{m_{31}} \ar@{-}[dr]|<<<<<<{m_{13}} \ar@{}[l]|<<<<<{\{1,3\}}
 &*++[o][F-]{} \ar@{-}[dl]|>>>>>>{m_{32}} \ar@{-}[d]^{m_{23}} \ar@{}[r]|<<<<<{\{2,3\}} &
 \\
 &*++[o][F-]{} \ar@{-}[dr]_{v_1} \ar@{}[l]|<<<<{\{1\}}
 &*++[o][F-]{} \ar@{-}[d]^{v_2}  \ar@{}[l]|<<<<{\{2\}}
 &*++[o][F-]{} \ar@{-}[dl]^{v_3} \ar@{}[r]|<<<<{\{3\}} &
 \\
 && *++[o][F-]{0} \ar@{}[r]|<<<{\emptyset}&&
}
\]
\caption{A game on 3 players}\label{f:3-players}
\end{figure}

Note that, if the triple $(i,j,k)$ is a permutation of $N$ then
\[ v_i + m_{ji} + m_k = T. \]
The convexity of the game is equivalent to the system of
inequalities
\begin{equation}\label{ineqs}
 v_i+v_j+m_k \leq T \leq m_i + m_j + v_k,
\end{equation}
where $(i,j,k)$ ranges over the permutations of $N$ (see the
appendix for details).
\end{example}

An \emph{efficient allocation} is a function $x:N \to \R$ such that
$x(N) = v(N)$. If in addition $x_i \geq v_i$, for $i \in N$, the
allocation is called \emph{individually rational}.

An efficient allocation assigns revenue to each player in the game
in such a way that the total revenue shared among the players is
exactly the value of the grand coalition $N$. The individual
rationality of an allocation then just means that each player should
be assigned revenue that is not below the individual value of that
player (otherwise that player would choose not to cooperate).

A convex game is \emph{essential} if $T > V$. In an inessential
game, $m_i(A)=v_i$, for all coalitions $A$ not containing the player
$i$, and there exists a unique efficient and individually rational
allocation, namely $x_i=v_i$, for all $i \in N$.

For a permutation $\pi$ of $N$, and a player $i$ in $N$, denote by
$P_i(\pi)$ the set of \emph{predecessors} of $i$ in $\pi$. This is
the set of players that appear before $i$ (to the left of $i$) in
the one-line representation of the permutation $\pi$. For instance,
if $n=6$ and $\pi = 142536$, then $P_3(\pi) = \{1,4,2,5\}$.

The set of permutations of $N$, denoted $\Pi_n$, represents all
possible orders in which the grand coalition can be formed by adding
the players one by one to the coalition. For each such order, the
players have different marginal contributions depending on the set
of players that has already joined. The \emph{marginal contribution}
of the player $i$ to the permutation $\pi$, denoted $m_i(\pi)$, is
the marginal contribution of the player $i$ to the coalition
$P_i(\pi)$ consisting of the predecessors of $i$ in $\pi$. In a
convex game, for any permutation $\pi \in \Pi_n$, we have $m_i(\pi)
\geq m_i(\emptyset) = v_i$ and $\sum_{i \in N} m_i(\pi) = T$. Thus
the marginal contribution vector along $\pi$ represents an efficient
and individually rational allocation for $v$.

An \emph{efficient solution function} $f$ is a function that assigns
an efficient allocation $f^v$ to every convex game (we emphasize
that we are not concerned with non-convex games).

Recall the definition of a well known efficient solution function
introduced by Shapley~\cite{shapley:value}.

\begin{definition}
The \emph{Shapley value} of a convex game $v:2^N \to \R$ is the
allocation $s$ given by
\[ s_i = \frac{1}{n!} \sum_{\pi \in \Pi_n} m_i(\pi).  \]
\end{definition}

Thus the Shapley value is the average of all marginal contribution
vectors along all permutations of $N$.

We also recall the definition of $\tau$-value, introduced by
Tijs~\cite{tijs:value}.

\begin{definition}
The $\tau$-\emph{value} of an essential convex game $v:2^N \to \R$
is the allocation given by
\[ \tau_i = \frac{M-T}{M-V}\ v_i + \frac{T-V}{M-V}\ m_i .\]

In the case of an inessential game, the $\tau$ value is the unique
efficient and individually rational allocation.
\end{definition}

Note that, for an essential game, $\tau_i = \lambda v_i +
(1-\lambda) m_i$, where $\lambda=\frac{M-T}{M-V}$ is the unique real
number in $[0,1]$ making the allocation efficient. For an
inessential game, $v_i = m_i$, for all $i$, and therefore the
formula $\tau_i = \lambda v_i + (1-\lambda) m_i$ gives the correct
$\tau$-value for all $\lambda$ in the interval $[0,1]$, i.e., the
normalizing coefficient $\lambda$ is not unique.

Both the Shapley value and the $\tau$-value are efficient solution
functions that assign an individually rational allocation to every
convex game.

\begin{example}
For any convex 2-player game,
\[ s_1 = \tau_1 = \frac{1}{2}(T+v_1-v_2), \qquad s_2 = \tau_2 = \frac{1}{2}(T+v_2-v_1). \]
\end{example}

\section{Encouraging the grand coalition}

We come to our main definition.

\begin{definition}
An efficient solution function $f$ \emph{encourages the grand
coalition} if for every convex game $v: 2^N \to \R$ and every
coalition $A \subseteq N$,
\[ f^v_i \geq f^{v_A}_i, \]
where $v_A:2^N \to \R$ is the convex sub-game of $v$ obtained by
restriction on the coalition $A$.
\end{definition}

Thus an efficient solution functions encourages the grand coalitions
if no player in any convex game would prefer any coalition (and its
associated allocation) over the grand coalition. If $f$ is an
efficient solution that encourages the grand coalition and if all
players were to vote for all coalitions they like (based on
maximizing the revenue they would obtain by applying the proposed
solution function $f$) the grand coalition would be chosen by each
player (even though some players may like some additional choices).

Note that the property of encouraging the grand coalition is a
global property of solution functions and not of individual
allocations (the property requires that we compare allocations in
different games).

\begin{theorem}
The Shapley value encourages the grand coalition in convex games.
\end{theorem}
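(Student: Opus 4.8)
The plan is to compute both sides straight from the permutation definition of the Shapley value and to bring in convexity only through the monotonicity of marginal contributions recalled above, namely that $m_i(C)\leq m_i(D)$ whenever $C\subseteq D$ and $i\notin D$. Fix a convex game $v$, a coalition $A\subseteq N$, and a player $i\in A$; this is the substantive case, and the one matching the ``preference'' interpretation of the definition, since a player prefers only coalitions it belongs to (for $i\notin A$, with the convention $v_A(B)=v(A\cap B)$ player $i$ is a null player in $v_A$ and $s^{v_A}_i=0$, so that case is outside the main point). Observe first that $v_A$ is again a convex game --- the restriction of a convex game is convex --- so $s^{v_A}$ is a legitimate Shapley allocation, and by definition
\[
 s^{v_A}_i=\frac{1}{|A|!}\sum_{\sigma}\bigl(v(P_i(\sigma)\cup i)-v(P_i(\sigma))\bigr),
\]
where $\sigma$ ranges over the $|A|!$ orderings of $A$ and $P_i(\sigma)\subseteq A$ is the set of players preceding $i$ in $\sigma$.

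The key step is a regrouping of $\Pi_n$ by the order a permutation induces on $A$. For each ordering $\sigma$ of $A$, let $E(\sigma)=\{\pi\in\Pi_n : \pi \text{ induces the order } \sigma \text{ on } A\}$. An interleaving count gives $|E(\sigma)|=n!/|A|!$, the same value for every $\sigma$, and the sets $E(\sigma)$ partition $\Pi_n$. Consequently
\[
 s^v_i=\frac{1}{|A|!}\sum_{\sigma}\frac{1}{|E(\sigma)|}\sum_{\pi\in E(\sigma)}m_i(\pi),
\]
so $s^v_i$ is the $\tfrac{1}{|A|!}$-weighted sum, over orderings $\sigma$ of $A$, of the averages of $m_i(\pi)$ over $\pi\in E(\sigma)$. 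Since $s^{v_A}_i$ has the same form with each average replaced by $m_i(P_i(\sigma))=v(P_i(\sigma)\cup i)-v(P_i(\sigma))$, it suffices to prove, for each fixed $\sigma$, the block inequality
\[
 \frac{1}{|E(\sigma)|}\sum_{\pi\in E(\sigma)}m_i(\pi)\geq m_i\bigl(P_i(\sigma)\bigr).
\]

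This follows from convexity and in fact holds term by term. For any $\pi\in E(\sigma)$, the predecessors of $i$ in $\pi$ form the set $P_i(\pi)=P_i(\sigma)\cup Q$, where $Q\subseteq N\setminus A$ is the set of players outside $A$ that precede $i$ in $\pi$; in particular $P_i(\sigma)\subseteq P_i(\pi)$ and $i\notin P_i(\pi)$. Monotonicity of marginal contributions then gives $m_i(\pi)=m_i(P_i(\pi))\geq m_i(P_i(\sigma))$ for every single $\pi\in E(\sigma)$, so the block average is at least $m_i(P_i(\sigma))$; summing over $\sigma$ with weights $1/|A|!$ yields $s^v_i\geq s^{v_A}_i$.

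I expect the only point needing care to be the combinatorial bookkeeping --- in particular that $|E(\sigma)|$ is independent of $\sigma$, which is exactly what makes the weight $1/|A|!$ match up on the two sides --- rather than the inequality itself, which drops out of convexity at once. One might be tempted to work instead from the closed form $s^v_i=\sum_{S\subseteq N\setminus\{i\}}\tfrac{|S|!\,(n-|S|-1)!}{n!}\,m_i(S)$ and compare it with the corresponding sum for $v_A$ coefficient by coefficient; but the weight attached to a given $m_i(S)$ differs between the two games, making that comparison awkward, so grouping permutations by their trace on $A$ is the route I would take.
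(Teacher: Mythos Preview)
Your proof is correct and essentially identical to the paper's: the paper phrases the argument via a ``flattening'' map $\Pi_n\to\Pi_{|A|}$ that deletes the players outside $A$, and your sets $E(\sigma)$ are exactly its fibers, each of size $n!/|A|!$. The termwise inequality $m_i(\pi)\geq m_i(P_i(\sigma))$ from $P_i(\sigma)\subseteq P_i(\pi)$ and convexity, followed by averaging, is the same in both proofs.
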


\begin{proof}
Without loss of generality, it is sufficient to show that player 1
does not prefer any coalition $M=\{1,\dots,m\}$ to the grand
coalition, i.e., it is sufficient to show that
\[
 \frac{1}{n!} \sum_{\pi \in \Pi_n} m_1(\pi) \geq
 \frac{1}{m!} \sum_{\sigma \in \Pi_m} m_1(\sigma),
\]
for $1 \leq m \leq n$.

Define a map $~\bar{}:\Pi_n \to \Pi_m$ by flattening the
permutations of $N$ to permutations of $M$. Namely, for a
permutation $\pi \in \Pi_n$ define the permutation $\bar{\pi} \in
\Pi_m$ by deleting the symbols $m+1,\dots,n$ from $\pi$ and keeping
the relative order of the symbols $1,\dots,m$ the same as in $\pi$
(for instance, if $n=6$, $m=4$ and $\pi = 153462$, then
$\bar{\pi}=1342$). Every permutation in $\Pi_m$ is the image of
exactly $n!/m!$ permutations in $\Pi_n$ under the flattening map.

Note that the set of predecessors $P_1(\pi)$ of $1$ in the
permutation $\pi$ contains the set of predecessors $P_1(\bar{\pi})$
of $1$ in the flattened permutation $\bar{\pi}$. Therefore, by the
convexity of the game, $m_1(\pi) = m_1(P_1(\pi)) \geq
m_1(P_1(\bar{\pi}))= m_1(\bar{\pi})$.

It follows that
\[
 \frac{1}{n!} \sum_{\pi \in \Pi_n} m_1(\pi) \geq
 \frac{1}{n!} \sum_{\pi \in \Pi_n} m_1(\bar{\pi}) =
 \frac{1}{n!}\cdot \frac{n!}{m!} \sum_{\sigma \in \Pi_n} m_1(\sigma) =
 \frac{1}{m!} \sum_{\sigma \in \Pi_n} m_1(\sigma),
\]
which is what we needed to prove.
\end{proof}

\begin{theorem}\label{t:tau3}
The $\tau$-value encourages the grand coalition in all convex games
of up to three players.
\end{theorem}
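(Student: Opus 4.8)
The plan is to reduce the statement to a single inequality about three-player games and then settle that inequality by an elementary case analysis. Since the $\tau$-value is symmetric under relabeling of the players, and since the comparison $f^v_i \geq f^{v_A}_i$ is relevant only when $i \in A$, it suffices to prove $\tau^v_1 \geq \tau^{v_A}_1$ for an arbitrary convex game $v$ on $N$ with $|N| \leq 3$ and an arbitrary coalition $A$ containing player $1$. If $|A| = 1$, then $A = \{1\}$ and $\tau^{v_{\{1\}}}_1 = v_1 \leq \tau^v_1$ by individual rationality; if $A = N$ the inequality is an equality. So the only case with content is $|N| = 3$ and $|A| = 2$; say $A = \{1,2\}$. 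Finally, if $v$ is inessential then $m_k = v_k$ for every $k$, so $v(\{1,2\}) = v_1 + v_2$ and both $\tau^v_1$ and $\tau^{v_{\{1,2\}}}_1$ equal $v_1$; hence we may assume $v$ is essential.

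The second step is to rewrite $\tau^v_1 \geq \tau^{v_{\{1,2\}}}_1$ in the notation of Example~\ref{e:3-player}. Put $p = M - T$ and $q = T - V$, so that $p \geq 0$, $q > 0$, and $M - V = p + q$; then $\tau^v_1 = \frac{p\,v_1 + q\,m_1}{p+q}$. On the other hand, the two-player formula of the Example gives $\tau^{v_{\{1,2\}}}_1 = \frac{v(\{1,2\}) + v_1 - v_2}{2}$, and $v(\{1,2\}) = T - m_3$ because $m_3 = v(N) - v(\{1,2\})$. Substituting this, clearing the positive denominators, and then eliminating $T$ via $T = m_1 + m_2 + m_3 - p$ together with $M - V = p + q$, the desired inequality collapses, after cancellation, to
\[
 q\,\alpha_1 + p\,\alpha_3 \geq q\,\alpha_2 ,
\]
where $\alpha_k = m_k - v_k$ for each $k$. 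The content here is small precisely because a convex three-player game is low-dimensional, but I expect this bookkeeping to be the step most prone to slips.

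It remains to check the reduced inequality, and convexity of $v$ supplies exactly the needed hypotheses. First, $\alpha_k \geq 0$ for every $k$, since marginal contributions dominate individual values in a convex game, and $\alpha_1 + \alpha_2 + \alpha_3 = M - V = p + q$. Next, the inequalities~\eqref{ineqs} translate directly: $v_i + v_j + m_k \leq T$ says $\alpha_k \leq T - V = q$, and $T \leq m_i + m_j + v_k$ says $\alpha_k \leq M - T = p$, so $\alpha_k \leq \min(p,q)$ for each $k$. Now the following elementary fact finishes the proof: if $p, q, \alpha_1, \alpha_2, \alpha_3 \geq 0$ satisfy $\alpha_1 + \alpha_2 + \alpha_3 = p + q$ and $\alpha_k \leq \min(p,q)$ for each $k$, then $q\alpha_1 + p\alpha_3 \geq q\alpha_2$. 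Indeed, if $p \geq q$ then $q\alpha_1 + p\alpha_3 \geq q(\alpha_1 + \alpha_3) = q(p + q - \alpha_2) \geq q\alpha_2$, the last step because $2\alpha_2 \leq p + q$; and if $p < q$ then $\alpha_1 + \alpha_3 = p + q - \alpha_2 \geq q$ since $\alpha_2 \leq p$, whence $q\alpha_1 + p\alpha_3 \geq p(\alpha_1 + \alpha_3) \geq pq \geq q\alpha_2$. By the symmetry noted at the outset, the same computation with the players permuted handles every player and every two-element coalition, so the $\tau$-value encourages the grand coalition in all convex games of at most three players.
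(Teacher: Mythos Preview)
Your proof is correct. The overall strategy matches the paper's---reduce to a single algebraic inequality for an essential three-player game and its two-player sub-game, then verify that inequality using the convexity constraints---but the algebraic route you take is genuinely different. The paper works with the intermediate marginal $m_{12}=v(\{1,2\})-v_2$ and, after substitution, arrives at an inequality of the form $v_1(\cdots)+m_{12}(\cdots)\leq 2m_1(m_3-v_3)$, which it then dispatches using the chain $v_1\leq m_{12}\leq m_1$ together with~\eqref{ineqs}. You instead pass to the symmetric variables $\alpha_k=m_k-v_k$, $p=M-T$, $q=T-V$ and reduce everything to the clean inequality $q\alpha_1+p\alpha_3\geq q\alpha_2$, which you handle by a case split on $p\gtrless q$ using only $\alpha_k\leq\min(p,q)$ and $\sum\alpha_k=p+q$. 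Your parametrization hides the individual values $v_k$ entirely and makes the role of convexity (the bounds $\alpha_k\leq\min(p,q)$) more transparent; the paper's route avoids the case distinction. Both are short and elementary; yours is arguably tidier bookkeeping, while the paper's stays closer to the geometric picture of Figure~\ref{f:3-players}.
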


\begin{proof}
Since the $\tau$-value coincides with the Shapley value for 2-player
convex games and Shapley value encourages the grand coalition, it
suffices to consider only 3-player games.

Further, the $\tau$-value always produces individually rational
allocations. Thus, it suffices to consider only 3-player games and
their 2-player sub-games.

By symmetry, it suffices to show that the convexity of a 3-player
game $v$ on $N=\{1,2,3\}$ implies the inequality
\[
 \tau^v_1 \geq \tau^{v'}_1,
\]
where $v'$ is the sub-game corresponding to the coalition
$A=\{1,2\}$.

If the game $v$ is inessential then so is its sub-game $v'$ and the
$\tau$-values for $v$ and $v'$ agree on $A$.

Thus we may assume that $v$ is essential and we need to show that
the convexity of $v$ implies
\begin{equation}\label{needtoshow}
 \frac{M-T}{M-V}\ v_1 + \frac{T-V}{M-V}\ m_1
     \geq \frac{1}{2}(T'+v_1-v_2),
\end{equation}
where $T' = v(A) = T-m_3$ is the value of the coalition $A=\{1,2\}$.

Denote $m_{12} = T'-v2 = T-m_3-v2$ (Figure~\ref{f:3-players} may be
useful for visualization; the marginal contribution vector along the
permutation $\pi=213$ is important in our considerations). Taking
into account that $M
> V$ (from the fact that $v$ is convex and essential) the
inequality~\eqref{needtoshow} takes the form
\[
 (M-T)v_1 + (T-V)m_1 \geq \frac{1}{2}(T'+v_1-v_2)(M-V),
\]
which is equivalent to
\[   v_1(m_1+m_3+v_2-v_1-v_3-m_2) + m_{12}(m_2+m_3+v_1-v_2-v_3-m_1) \leq 2m_1(m_3-v_3), \]
after substituting $V=v_1+v_2+v_3$, $M = m_1+m_2+m_3$, $T = v_2 +
m_{12}+ m_3$, and $T' = v_2 +m_{12}$, and performing simple
algebraic manipulations. The convexity implies that $v_1 \leq m_{12}
\leq m_1$, as well as that $m_1+m_3+v_2-v_1-v_3-m_2 \geq 0$ and
$m_2+m_3+v_1-v_2-v_3-m_1 \geq 0$ (see the inequalities
in~\eqref{ineqs}). Thus
\begin{multline*}
   v_1(m_1+m_3+v_2-v_1-v_3-m_2) + m_{12}(m_2+m_3+v_1-v_2-v_3-m_1) \leq \\
   \leq m_1(m_1+m_3+v_2-v_1-v_3-m_2+m_2+m_3+v_1-v_2-v_3-m_1) = 2m_1(m_3-v_3),
\end{multline*}
which is what we needed to prove.
\end{proof}

\begin{example}\label{e:tau-not-good}
Consider again the convex game in Example~\ref{e:4-player}. This
example shows that the $\tau$-value does not necessarily encourage
the grand coalition for convex 4-player games.

Indeed, we have $T=6$, $V=0$, $M=11$, which shows that the
normalizing coefficient $\lambda$ in the formula for the
$\tau$-value is $\lambda = (M-T)/(M-V)=5/11$. Direct calculation
then gives the $\tau$-values for $v$
\[
 \tau^v_1 = \frac{18}{11} \approx 1.64, \qquad \tau^v_2 = \frac{18}{11} \approx 1.64, \qquad
 \tau^v_3 = \frac{18}{11} \approx 1.64, \qquad \tau^v_4 = \frac{12}{11} \approx 1.09~.
\]
On the other hand, for the 3-player sub-game $v'$ determined by the
coalition $A=\{1,2,3\}$, we have $T'=4$, $V'=0$, $M'=7$,
$\lambda'=3/7$ and the $\tau$-values for $v'$ are
\[
 \tau^{v'}_1 = \frac{12}{7} \approx 1.71. \qquad
 \tau^{v'}_2 = \frac{8}{7} \approx 1.14, \qquad
 \tau^{v'}_3 = \frac{8}{7} \approx 1.14~.
\]
Thus player 1 would prefer the coalition $A$ to the grand coalition,
showing that the $\tau$-value does not necessarily encourage the
grand coalition.
\end{example}

\section{Relation to the core}

We consider the relation between efficient solution functions that
encourage the grand coalition and the core of a convex game.

\begin{definition}
The \emph{core} of a convex game $v: 2^N \to \R$ is the set of all
efficient allocations $x:N \to \R$ such that, for every coalition $A
\subseteq N$,
\[ x(A) \geq v(A). \]
\end{definition}

Note that every allocation in the core is individually rational and
we may say that the core allocations are rational with respect to
any coalition.

\begin{proposition}\label{p:core}
Let $f$ be an efficient solution function that encourages the grand
coalition in convex games. Then, for every convex game $v$, the
allocation $f^v$ is in the core of $v$.
\end{proposition}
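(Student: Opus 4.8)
The plan is to show that $f^v$ is efficient (which is immediate, since $f$ is an efficient solution function) and that $f^v(A) \geq v(A)$ for every coalition $A \subseteq N$. Fix such a coalition $A$. The key idea is to relate the value $f^v_i$ assigned to each player $i \in A$ to the value $f^{v_A}_i$ assigned to $i$ in the sub-game $v_A$ obtained by restricting $v$ to $A$. Since $f$ encourages the grand coalition, we have $f^v_i \geq f^{v_A}_i$ for every $i \in A$. Summing over $i \in A$ gives
\[
 f^v(A) = \sum_{i \in A} f^v_i \geq \sum_{i \in A} f^{v_A}_i = f^{v_A}(A).
\]

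First I would observe that $v_A$, viewed as a game on $N$, assigns to the coalition $A$ the same value $v(A)$; more precisely, in the sub-game corresponding to $A$, the coalition $A$ plays the role of the grand coalition. Since $f$ is an efficient solution function, the allocation $f^{v_A}$ is efficient for $v_A$, meaning that the total revenue it distributes among the players of $A$ equals the value of the ``grand coalition'' of the sub-game, i.e. $f^{v_A}(A) = v_A(A) = v(A)$. Combining this with the chain of inequalities above yields $f^v(A) \geq v(A)$, which is exactly the core condition for the coalition $A$. As $A$ was arbitrary, $f^v$ lies in the core of $v$.

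The only subtlety — and the step I would be most careful about — is making sure the bookkeeping with the sub-game is consistent: the definition of ``encourages the grand coalition'' compares $f^v_i$ with $f^{v_A}_i$ for the convex sub-game $v_A$, so I need $v_A$ to be a convex game (so that $f$ applies to it and the ``encouraging'' inequality is available) and I need the efficiency of $f$ on $v_A$ to translate into the statement $f^{v_A}(A) = v(A)$ rather than a statement about some larger coalition. Both hold because the restriction of a convex game to a coalition is again convex, and because for the sub-game $v_A$ the relevant ``grand coalition'' is $A$ itself with value $v(A)$. Once these conventions are pinned down, the argument is a one-line summation, so there is no real computational obstacle here; the content is entirely in recognizing that encouraging the grand coalition, plus efficiency on sub-games, forces exactly the family of inequalities defining the core.
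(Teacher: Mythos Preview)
Your proposal is correct and follows essentially the same approach as the paper: the paper's proof is the one-line summation
\[
 f^v(A) = \sum_{i \in A} f^v_i \geq \sum_{i \in A} f^{v_A}_i = v(A),
\]
which is exactly the argument you describe, with your additional care about the sub-game conventions (convexity of $v_A$ and efficiency yielding $f^{v_A}(A)=v(A)$) making explicit what the paper leaves implicit.
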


\begin{proof}
Let $f$ be an efficient solution function that encourages the grand
coalition in convex games and let $v$ be a convex game. Then, for
any coalition $A$,
\[
 f^v(A) = \sum_{i \in A} f^v_i \geq \sum_{i \in A} f^{v_A}_i = v(A).
\]
Thus $f^v$ is in the core of $v$.
\end{proof}

Since the $\tau$-value does not always produce allocations in the
core of a convex function, we could immediately see that it cannot
encourage the grand coalition in general. However, in
Example~\ref{e:tau-not-good} the $\tau$-value of the game $v$ on $N$
is in the core, as is the $\tau$-value of all of its sub-games, but
this was still not sufficient to encourage the grand coalition.

The proof of Proposition~\ref{p:core} indicates that, for essential
solution functions, the property of encouraging the grand coalition
is a refinement of the property of producing solutions in the core.
Indeed, the core condition requires that, for each coalition $A
\subseteq N$, the sum $f^v(A) = \sum_{i \in A} f^v_i$ is at least as
large as the sum $\sum_{i \in A} f^{v_A}_i= v(A)$. On the other
hand, for solution functions that encourage the grand coalition,
each term in the former sum must be at least as large as the
corresponding term in the latter sum. In order to see that this
refinement is proper, we provide an example of an efficient solution
function that always produces allocations in the core of convex
games, but nevertheless fails to encourage the grand coalition.

\begin{example}
For any convex game $v$ and any permutation $\pi$ of $N$, the vector
of marginal contributions along $\pi$ is an efficient solution in
the core of $v$. By convexity of the core, any convex linear
combination of marginal contributions along several permutations is
also in the core. Therefore, we may define an efficient solution
function $f$ as follows. Among all permutations of $N$ select those
that give the largest vectors (in the usual sense in $\R^n$) of
marginal contributions and calculate their average. Thus, if
\[
 L = \left\{\ \pi \in \Pi_n \mid \sum_{i \in N} m_i^v(\pi)^2 \geq \sum_{i \in N} m_i^v(\sigma)^2,
                            \text{ for all } \sigma \in \Pi_n \ \right\},
\]
define
\[ f^v_i  = \frac{1}{|L|} \sum_{\pi \in L} m_i^v(\pi). \]

To see that $f$ does not encourage the grand coalition in convex
games, even though it always produces allocations in the core,
consider the game in Example~\ref{e:4-player} restricted to
$N=\{1,2,3\}$ (completely ignore player 4).

In this game, the largest marginal vectors are the two vectors along
$\pi_1 = 231$ and $\pi_2 = 321$ giving
\[
 f^v_1 = 3, \qquad f^v_2 = \frac{1}{2}, \qquad f^v_3 = \frac{1}{2}.
\]
On the other hand, if we restrict to the sub-game $v'$ defined by
the coalition $A=\{1,2\}$ we obtain
\[
 f^{v'}_1 = 1, \qquad f^{v'}_2 = 1.
\]
Thus player 2 would prefer the coalition $A$ to the grand coalition.
\end{example}

\section{Relation to population monotone allocation schemes}

The notion of a population monotone allocation scheme was introduced
in~\cite{sprumont:monotonic}.

Given a game $v$, a monotone allocation scheme is a set of efficient
allocations $\{x^{v_A} \mid A \subseteq N\}$ associated to the
sub-games of $v$, in such a way that, for every player $i$ and all
coalitions $A$ and $B$ with $i \in A \subseteq B \subseteq N$,
\[ x_i^{v_A} \leq x_i^{v_B}. \]

This definition is close in spirit to our definition of solution
functions that encourage the grand coalition. However, the emphasis
goes in different direction. We study efficient solution functions
that behave well on convex games, while Sprumont studies games for
which well behaved allocation schemes exist. More precisely, the
main thrust of Sprumont's work is a characterization of games for
which population monotone allocation schemes exist (this includes
all convex games, but not all games with non-empty core). For us, on
the other hand, the important question is which solution functions
always produce (or fail to produce) population monotone allocation
schemes in all convex games.

Sprumont shows that every 3-player game that is totally balanced
(see the appendix for a definition) always has a population monotone
allocation scheme. Nevertheless, Theorem~\ref{t:tau3} does not
follow directly from this observation (we still need to prove that
the specific scheme induced by the $\tau$-value solution function
provides such an allocation scheme).

Further, Sprumont shows that the glove game on 4 players fails to
have a population monotonic allocation scheme. Again, this example
is not helpful in our considerations, since the glove game is not
convex (the main point of Example~\ref{e:tau-not-good} is that the
$\tau$-value fails to provide a population monotone allocation
scheme on a convex game; on the other hand this game certainly has a
population monotone allocation scheme, namely the one induced by the
Shapley value).

\section{On necessity versus desirability}

Observe that even if a solution function that does not encourage the
grand coalition is used and, for a concrete game $v$, there exists a
player that prefers some smaller coalition over the grand one, this
does not mean that the grand coalition will not be formed. For
instance, in Example~\ref{e:tau-not-good} player 1 prefers
$A=\{1,2,3\}$ to $N$, but will have difficulties convincing player 2
and player 3 to form this coalition, since they certainly prefer the
payout provided to them by the grand coalition. Therefore player 1
would perhaps choose to join the grand coalition (however
grudgingly), since it is still offering a better payoff than
going-it-alone (which would bring a payoff of 0 to player 1).
However, even if player 1 joins the grand coalition, it would be
unsatisfied with the situation and may show its discontent by
actively and visibly (or covertly and by using inappropriate means)
working to undermine the grand coalition and exclude player 4.

Thus encouraging the grand coalition is not necessary to coalescence
all players into the grand coalition, but may be desirable in
practice.

\subsection*{Acknowledgments} The authors would like to thank
Imma Curiel, who provided valuable suggestions in the early stages
of the manuscript preparation, and Iurie Boreico, who did the same
in the final stages.

\appendix

\section{Remarks on 3-player games}

In Example~\ref{e:3-player} we provided a quick remark on a
condition on 3-player games that is equivalent to convexity and we
used this condition in the course of the proof of
Theorem~\ref{t:tau3}.  We provide a brief justification.

\begin{proposition}
A 3-player game $v$ is convex if and only if, for every permutation
$(i,j,k)$ of $N$,
\begin{equation}\label{ineqs2}
 v_i+v_j+m_k \leq T \leq m_i + m_j + v_k.
\end{equation}
\end{proposition}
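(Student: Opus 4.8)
The plan is to prove the equivalence by a direct, finite verification, using the definition of convexity for a 3-player game and the structural identities already recorded in Example~\ref{e:3-player}. For $N=\{1,2,3\}$ the convexity condition $v(A)+v(B)\le v(A\cup B)+v(A\cap B)$ needs to be checked only for pairs $(A,B)$ that are not nested (when $A\subseteq B$ or $B\subseteq A$ it is trivially an equality), and up to symmetry there are essentially two types of such pairs: two singletons $\{i\},\{j\}$, and a singleton $\{k\}$ together with a $2$-element set $\{i,j\}$ (here $A\cap B=\emptyset$), or two distinct $2$-element sets $\{i,j\},\{i,k\}$ (here $A\cap B=\{i\}$, $A\cup B=N$). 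So the whole content of convexity reduces to a short explicit list of inequalities among the values $v_i$, $v(\{i,j\})$, and $T$.

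First I would translate each of those inequalities into the $m$-notation. Using $v(\{i,j\}) = v_i + m_{ji} = v_j + m_{ij}$ and $T = v(\{i,j\}) + m_k = v_i + m_{ji} + m_k$, the singleton-vs-singleton condition $v_i + v_j \le v(\{i,j\}) + v(\emptyset) = v(\{i,j\})$ becomes $v_j \le m_{ij}$ (equivalently $v_i\le m_{ji}$), i.e. $m_k(\emptyset)\le m_k(\{\ell\})$ for the appropriate indices — a marginal-contribution monotonicity at the bottom of the lattice. The condition $v_k + v(\{i,j\}) \le T + v(\emptyset) = T$ is exactly $v_k + v(\{i,j\})\le T$, and since $T = v(\{i,j\}) + m_k$ this says $v_k\le m_k$, i.e. $m_k(\emptyset)\le m_k(\{i,j\})$. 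Finally the two-doubletons condition $v(\{i,j\}) + v(\{i,k\}) \le T + v_i$ rearranges, via $v(\{i,j\}) = v_i + m_{ji}$ and $T = v_i + m_{ji} + m_k = v_i + m_{ki} + m_j$ (using $v_i + m_{ji} + m_k = T$ from Example~\ref{e:3-player}), into $m_j \le m_{ji}$ wait — more carefully, $v(\{i,k\}) \le T + v_i - v(\{i,j\}) = m_k + v_i$, and $v(\{i,k\}) = v_i + m_{ki}$, so this is $m_{ki}\le m_k$, i.e. $m_k(\{i\})\le m_k(\{i,j\})$, another marginal-contribution monotonicity step.

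Assembling these, the collection of all convexity inequalities (over all index choices) is precisely the chain conditions $v_k = m_k(\emptyset)\le m_k(\{i\})\le m_k(\{i,j\})=m_k$ together with $m_k(\emptyset)\le m_k(\{i,j\})$ for every permutation $(i,j,k)$; and the displayed system~\eqref{ineqs2}, namely $v_i+v_j+m_k\le T\le m_i+m_j+v_k$, is an equivalent repackaging of exactly the same set of inequalities once one substitutes $T = v_i + v_j + m_{ij} - v_j + \dots$ — concretely, $v_i + v_j + m_k \le T$ is the $v_k\le m_k$ condition written additively (subtract $v_i+v_j$ from $T = v_i + m_{ji} + m_k$ after noting $v(\{i,j\})\ge v_i+v_j$), and $T\le m_i + m_j + v_k$ is the two-doubletons condition written additively. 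I would present this as two implications: convexity $\Rightarrow$~\eqref{ineqs2} by picking off the relevant pairs $(A,B)$, and~\eqref{ineqs2} $\Rightarrow$ convexity by checking the handful of non-nested pairs and rewriting each back. The main obstacle is purely bookkeeping: keeping the index permutations straight so that one genuinely covers every non-nested pair $(A,B)$ and does not accidentally conflate $m_{ij}$ with $m_{ji}$; there is no real analytic difficulty, and the identity $v_i + m_{ji} + m_k = T$ together with $v_i\le m_{ji}\le m_i$ (which itself follows from the listed inequalities) does all the work.
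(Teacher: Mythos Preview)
Your plan is the same as the paper's in spirit: both reduce convexity of a 3-player game to the chain $v_i \le m_{ij} \le m_i$ and then pass to \eqref{ineqs2} via the identity $T = v_j + m_{ij} + m_k$. The paper gets the chain in one line by quoting the marginal-contribution characterisation of convexity and then adds $v_j + m_k$ throughout; you rebuild it from the definition by enumerating the non-nested pairs $(A,B)$. Either route is fine.

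However, your execution contains a concrete bookkeeping slip that would leave a hole in the reverse implication. You write that ``$v_i+v_j+m_k\le T$ is the $v_k\le m_k$ condition written additively''; it is not. Since $T=v(\{i,j\})+m_k$, the left half of \eqref{ineqs2} is exactly $v_i+v_j\le v(\{i,j\})$, i.e.\ your \emph{singleton--singleton} inequality, not the singleton--doubleton one. (The right half $T\le m_i+m_j+v_k$ is, as you correctly say, the doubleton--doubleton inequality $v(\{j,k\})+v(\{i,k\})\le T+v_k$.) Consequently the singleton--doubleton condition $v_k\le m_k$ does \emph{not} appear directly as either side of \eqref{ineqs2}, and your sketch of ``\eqref{ineqs2} $\Rightarrow$ convexity'' as ``checking the handful of non-nested pairs and rewriting each back'' would miss this case. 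To recover $v_k\le m_k$ from \eqref{ineqs2} you must combine a left inequality for one permutation with a right inequality for another to obtain $v_k\le m_{kj}\le m_k$ --- precisely the chain you mention in your last sentence but never actually derive from \eqref{ineqs2}. Once you correct the labeling and make this transitivity step explicit, your argument coincides with the paper's.
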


\begin{proof}
As we already remarked, a game is convex if and only if, for every
player $i$, and all coalitions $A \subseteq B$ that do not contain
$i$, $m_i(A) \leq m_i(B)$.

Therefore, in the context of a 3-player game the convexity is
equivalent to the system of inequalities
\begin{equation}\label{e:system}
 v_i \leq m_{ij} \leq m_i,
\end{equation}
for $i,j \in N$, $i \neq j$.

The inequality~\eqref{e:system} is equivalent to
\[ v_i+v_j+m_k \leq v_j+ m_{ij} + m_k \leq m_i +v_j + m_k, \]
where $k$ is the third player (different from $i$ and $j$). Since
$T=v_j+m_{ij}+m_k$ we obtain
\[ v_i+v_j+m_k \leq T \leq m_i +v_j + m_k. \]

Thus, when looked as systems of inequalities, \eqref{ineqs2} and
\eqref{e:system} are equivalent.
\end{proof}

The games with non-empty core were characterized by
Bondareva~\cite{bondareva:core}. Namely, a game has a non-empty core
if and only if it is balanced. A game $v$ is balanced if, for every
sequence of non-empty subsets $A_1,\dots,A_s$ of $N$ and every
sequence of positive real numbers $\lambda_1,\dots,\lambda_s$ such
that
\begin{equation}\label{e:=chi}
 \sum_{\ell=1}^s \lambda_\ell \chi_{A_\ell} = \chi_N,
\end{equation}
where $\chi_{A_\ell}$ and $\chi_N$ denote the characteristic
function of the sets $A_\ell$ and $N$, we have
\[ \sum_{\ell=1}^s \lambda_\ell v(A_\ell) \leq v(N). \]
A game is totally balanced if all of its sub-games are balanced.

The following modification of the balancing condition is also valid.

\begin{proposition}\label{p:bondareva}
A game $v$ has non-empty core if and only if, for every sequence of
non-empty subsets $A_1,\dots,A_s$ of $N$ and every sequence of
positive real numbers $\lambda_1,\dots,\lambda_s$ such that
\begin{equation}\label{e:leq-chi}
 \sum_{\ell=1}^s \lambda_\ell \chi_{A_\ell} \leq \chi_N,
\end{equation}
where the inequality is considered pointwise, we have
\begin{equation}\label{e:leqN}
 \sum_{\ell=1}^s \lambda_\ell v(A_\ell) \leq v(N).
\end{equation}
\end{proposition}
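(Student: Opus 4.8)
The plan is to derive the statement from Bondareva's characterization recalled above, which identifies non-emptiness of the core with the classical balancing condition (the conclusion~\eqref{e:leqN} for all families satisfying the equality~\eqref{e:=chi}), together with an elementary reindexing of a double sum over players and coalitions. One implication is then free: if $v$ satisfies the condition in the proposition, it in particular satisfies~\eqref{e:leqN} whenever $\sum_{\ell}\lambda_\ell\chi_{A_\ell}=\chi_N$, that equality being a special case of the pointwise inequality~\eqref{e:leq-chi}; hence $v$ is balanced and its core is non-empty.

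For the converse I would pick any allocation $x:N\to\R$ in the core and estimate, using the core inequalities $v(A_\ell)\le x(A_\ell)$ term by term and then interchanging the order of summation,
\[
 \sum_{\ell=1}^{s}\lambda_\ell v(A_\ell)\ \le\ \sum_{\ell=1}^{s}\lambda_\ell x(A_\ell)\ =\ \sum_{j\in N}\Bigl(\sum_{\ell:\,j\in A_\ell}\lambda_\ell\Bigr)x_j .
\]
By~\eqref{e:leq-chi} the bracketed coefficient of each $x_j$ lies in $[0,1]$, so if every $x_j$ is nonnegative the right-hand side is at most $\sum_{j\in N}x_j=x(N)=v(N)$, which is~\eqref{e:leqN}. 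A variant that avoids exhibiting a core allocation is to reduce directly to the classical condition: set $\mu_j=1-\sum_{\ell:\,j\in A_\ell}\lambda_\ell$, retain the players with $\mu_j>0$, and adjoin to $A_1,\dots,A_s$ the singletons $\{j\}$ with weights $\mu_j$; the enlarged family realizes~\eqref{e:=chi} exactly, so balancedness yields $\sum_\ell\lambda_\ell v(A_\ell)+\sum_j\mu_j v(\{j\})\le v(N)$, and~\eqref{e:leqN} follows once the added terms $\mu_j v(\{j\})$ are discarded.

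The step I expect to need the most care is precisely the sign bookkeeping at the end of each route — nonnegativity of the chosen core allocation in the first, nonnegativity of the adjoined singleton values in the second — and these amount to the same thing, since any core allocation satisfies $x_j=x(\{j\})\ge v(\{j\})$. For the games that matter here, where $v(A)$ records a nonnegative earning (in particular whenever $v(\{j\})\ge 0$ for all $j$), this holds automatically, and the remainder is just the routine identity $\sum_\ell\lambda_\ell x(A_\ell)=\sum_{j\in N}\bigl(\sum_{\ell:\,j\in A_\ell}\lambda_\ell\bigr)x_j$ together with $0\le\sum_{\ell:\,j\in A_\ell}\lambda_\ell\le 1$.
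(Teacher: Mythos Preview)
Your argument is the paper's: one implication specializes the pointwise inequality to equality and invokes Bondareva, and the other picks a core allocation $x$, uses $v(A_\ell)\le x(A_\ell)$ term by term, swaps the order of summation, and bounds by $\sum_i x_i=v(N)$. The paper passes silently over the sign issue you flag --- its final step $\sum_i\bigl(\sum_\ell\lambda_\ell\chi_{A_\ell}(i)\bigr)x_i\le\sum_i x_i$ tacitly needs each $x_i\ge0$ --- so your caveat (and your singleton-padding variant, which runs into the same nonnegativity requirement on $v(\{j\})$) is well placed.
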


\begin{proof}
Let $x$ be an efficient allocation in the core of $v$, and let
$\sum_{\ell=1}^s \lambda_\ell \chi_{A_\ell} \leq \chi_N$, for some
positive real numbers $\lambda_1,\dots,\lambda_s$ and a sequence of
non-empty subsets $A_1,\dots,A_s$ of $N$. We have
\begin{align*}
 \sum_{\ell=1}^s \lambda_\ell v(A_\ell) &\leq
 \sum_{\ell=1}^s \lambda_\ell x(A_\ell) =
 \sum_{\ell=1}^s \lambda_\ell \sum_{i \in A_\ell} x_i =
 \sum_{\ell=1}^s \lambda_\ell \sum_{i=1}^n \chi_{A_\ell}(i)x_i = \\
 &= \sum_{i=1}^n \left( \sum_{\ell=1}^s \lambda_\ell\chi_{A_\ell}(i)\right)x_i \leq
 \sum_{i=1}^n x_i = v(N).
\end{align*}
The other direction follows from the result of Bondareva. Namely,
if~\eqref{e:leqN} holds whenever~\eqref{e:leq-chi} does,
then~~\eqref{e:leqN} also holds whenever~\eqref{e:=chi} does.
Therefore the core of $v$ is non-empty.
\end{proof}

It is easy to see that for a 2-player game, convexity,
super-additivity,  and the existence of core allocations are
equivalent properties and it is well known that these properties are
not equivalent for more than 2 players.

\begin{proposition}
Let $v$ be a 3-player super-additive game. Define $M_{12}=v_{12} -
v_1-v_2$, $M_{13}=v_{13} - v_1-v_3$, $M_{23}=v_{23} - v_2-v_3$, and
$S= v(N) - v_1-v_2-v_3$, where $v_{ij}$ is the value of the
coalition $\{i,j\}$.

(a) The game $v$ has a non-empty core if and only if
\begin{equation}\label{1/2}
 S \geq \frac{1}{2}(M_{12} + M_{13} + M_{23}).
\end{equation}

(b) The game $v$ is convex if and only if
\begin{equation}\label{pairs}
 S \geq \max \{ M_{12} + M_{13},\ M_{12} + M_{23},\ M_{13}+M_{23} \ \}.
\end{equation}
\end{proposition}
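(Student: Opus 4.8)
The plan is to handle parts (a) and (b) separately, both by translating the abstract conditions into the concrete notation $M_{12},M_{13},M_{23},S$ and then invoking the results already available in the excerpt.

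For part (a), I would apply Proposition~\ref{p:bondareva}, the modified Bondareva condition. First I would observe that, for a $3$-player game, a non-empty core is equivalent to the finitely many inequalities coming from the relevant balanced (or subbalanced) collections. The only genuinely new constraint beyond super-additivity is the one arising from the collection $\{1,2\},\{1,3\},\{2,3\}$ with all coefficients $\lambda_\ell=\frac12$ (which satisfies $\sum\lambda_\ell\chi_{A_\ell}=\chi_N$): this gives $\frac12(v_{12}+v_{13}+v_{23})\le v(N)$. Subtracting $v_1+v_2+v_3$ from both sides and regrouping yields exactly~\eqref{1/2}. For the converse, I would check that~\eqref{1/2} together with super-additivity implies~\eqref{e:leqN} for every admissible $\lambda$-weighted subcollection with $\sum\lambda_\ell\chi_{A_\ell}\le\chi_N$; since $n=3$ the list of minimal balanced families is short (singletons, pairs with the complementary singleton, the symmetric triple of pairs, and $N$ itself), so one only needs to verify a handful of cases, each reducing either to super-additivity or to~\eqref{1/2}. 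By Proposition~\ref{p:bondareva} this establishes that the core is non-empty.

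For part (b), I would use the characterization of convexity for $3$-player games recalled in Example~\ref{e:3-player} and proved in the appendix: convexity is equivalent to $v_i\le m_{ij}\le m_i$ for all $i\ne j$, equivalently to the inequalities~\eqref{ineqs} ranging over permutations $(i,j,k)$ of $N$. I would rewrite the defining quantities in the new notation: from $v_i+m_{ji}+m_k=T$ one gets $m_k=T-v_i-m_{ji}$, and the differences $M_{ij}$ and $S$ express the pairwise and full ``surplus''. The key step is to show that the system $\{v_i\le m_{ij}\le m_i\}$, after substitution, collapses precisely to the three inequalities $S\ge M_{ij}+M_{ik}$ (one for each choice of the ``isolated'' index $k$), whose conjunction is~\eqref{pairs}. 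Concretely, $m_{ij}\ge v_i$ translates to $S\ge M_{ij}+M_{jk}$-type statements while $m_i\ge m_{ij}$ translates to the complementary ones, and a short bookkeeping check shows the six inequalities in~\eqref{ineqs} reduce to exactly the three in~\eqref{pairs} (the upper and lower bounds in~\eqref{ineqs} pairing up). Then~\eqref{pairs} is equivalent to convexity.

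The main obstacle is the bookkeeping in part (b): one must be careful that the six permutation-indexed inequalities of~\eqref{ineqs}, or equivalently the six inequalities $v_i\le m_{ij}$ and $m_{ij}\le m_i$, genuinely reduce to only three distinct conditions after the substitution, rather than six; verifying that the ``redundant'' ones are implied requires writing $m_{ij}=S-M_{ik}-\,(\text{something})+\dots$ carefully in terms of $M_{12},M_{13},M_{23},S$. I expect this to be a routine but slightly delicate algebraic identification; part (a) is comparatively immediate once Proposition~\ref{p:bondareva} is in hand. I would not grind through the substitutions here, but would note that both computations are elementary linear algebra in the four unknowns $M_{12},M_{13},M_{23},S$.
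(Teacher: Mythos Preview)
Your plan is correct in outline, but it diverges from the paper's proof in both parts, and in part~(b) it is needlessly roundabout.

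For the converse of~(a), the paper does \emph{not} appeal to Bondareva's criterion at all: it explicitly exhibits a core allocation, splitting into two cases according to whether the numbers $M_{12},M_{13},M_{23}$ satisfy the triangle inequalities. Your approach---verify the balancedness inequality on each minimal balanced collection of $N=\{1,2,3\}$ and invoke Bondareva---is also valid and arguably cleaner, since it avoids the case split; but it relies on the (standard, though not proved in the paper) fact that it suffices to check minimal balanced collections. What you gain is brevity; what the paper's construction gains is an explicit core point.

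For~(b) the paper takes a much shorter path than the one you describe. Rather than passing through the marginal-contribution form $v_i\le m_{ij}\le m_i$ and then performing the ``slightly delicate algebraic identification'' you worry about, the paper simply observes that the convexity inequality $v(A)+v(B)\le v(A\cup B)+v(A\cap B)$ is automatic when $A,B$ are comparable, and is super-additivity when $A\cap B=\emptyset$; hence, given super-additivity, the only nontrivial cases for $n=3$ are $A=\{i,j\}$, $B=\{i,k\}$, giving $v_{ij}+v_{ik}\le v(N)+v_i$, which rewrites immediately as $M_{ij}+M_{ik}\le S$. This yields~\eqref{pairs} in one line, with no bookkeeping. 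Your route via~\eqref{ineqs} does work (the six inequalities $m_{ij}\le m_i$ each reduce to one of the three inequalities $M_{ij}+M_{ik}\le S$, each appearing twice, while the inequalities $v_i\le m_{ij}$ reduce to $M_{ij}\ge 0$, which is part of the assumed super-additivity), but the detour is unnecessary.
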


\begin{proof}
(a) Assume $v$ has a non-empty core. By
Proposition~\ref{p:bondareva} (or directly by the argument used in
the proof), since $\chi_{12} + \chi_{23} + \chi_{23} = 2\chi_N$, we
obtain that $v_{12}+v_{13}+v_{23} \leq 2v(N)$. Therefore, $M_{12} +
M_{13} + M_{23} = v_{12}+v_{13}+v_{23} - 2(v_1+v_2+v_3) \leq 2v(N) -
2(v_1+v_2+v_3) = 2S$.

Conversely, assume that~\eqref{1/2} holds. Instead of trying to use
Proposition~\ref{p:bondareva}, we construct explicitly an element in
the core.

Assume that the sum of every pair of numbers from
$\{M_{12},M_{13},M_{23}\}$ is no smaller than the third one
(triangle-like inequalities hold). Set
$a_1=\frac{M_{12}+M_{13}-M_{23}}{2}$,
$a_2=\frac{M_{12}+M_{23}-M_{13}}{2}$,
$a_3=\frac{M_{13}+M_{23}-M_{12}}{2}$, and
$t=\frac{1}{3}\left(S-\frac{1}{2}\left(M_{12}+M_{13}+M_{23}\right)\right)$.
Then $a_1$, $a_2$, $a_3$, and $t$ are non-negative. Set $x_1 =
v_1+a_1+t$, $x_2 = v_2+a_2+t$, and $x_3 = v_3+a_3+t$. Since
$x_1+x_2+x_3 = v(N)$, the allocation $x$ is efficient. The
allocation $x$ is individually rational (by the non-negativity of
$a_1$, $a_2$, $a_3$, and $t$). We also have
\[ x_1+x_2 = v_1 + v_2 + M_{12} + 2t = v_{12}+2t \geq v_{12}. \]
Thus the allocation $x$ is rational for the coalition $\{1,2\}$. By
symmetry, $x$ is rational for the other two 2-element coalitions as
well. Note that we have not used yet the super-additivity property.

Assume that the sum of two of the numbers $M_{12},M_{13},M_{23}$ is
smaller than the third, say $M_{12}>M_{13}+M_{23}$ and set
$t=\frac{1}{2}(S-M_{13}-M_{23})$. The super-additivity implies that
$v(N) \geq v_{12}+v_3$. Therefore $S = v(N) - v_1 -v_2 -v_3 \geq
v_{12}+v_3 -v_1-v_2-v_3 = M_{12}$. Since $S \geq M_{12}>
M_{13}+M_{23}$, we have that $t>0$. Set $x_1 = v_1+M_{13}+t$, $x_2 =
v_2+M_{23}+t$, and $x_3 = v_3$. Since $x_1+x_2+x_3 = v(N)$, the
allocation $x$ is efficient. The allocation $x$ is individually
rational by the non-negativity of $M_{12}$, $M_{13}$, $M_{23}$, and
$t$ (for $i \neq j$, $M_{ij}$ is non-negative by the
super-additivity property). Further,
\[ x_1+x_3 = v_1+v_3+M_{13}+t = v_{13}+t \geq v_{13} \]
and, by symmetry,
\[ x_2+x_3 \geq v_{23}. \]
We also have
\[ x_1+x_2 = v_1+M_{13}+t + v_2+M_{23}+t = v_1+v_2 + S = v_{12}-M_{12} +S \geq v_{12}.\]
Thus the allocation $x$ is rational for all 2-element coalitions.

(b) Note that the convexity needs to be checked only for coalitions
that are not comparable (the convexity condition is trivially
satisfied when one of the coalitions is included in the other).
Therefore, given the super-additivity of the game, $v$ is convex if
and only if, for every permutation $(i,j,k)$ of $N$
\[ v_{ij} + v_{ik} \leq v(N) + v_i. \]
The last inequality is equivalent to
\[ M_{ij}+M_{ik} \leq S. \qedhere \]
\end{proof}

Therefore, we see that the convexity and the existence of the core
are not equivalent for 3-player games even in the presence of
super-additivity. For instance, if $v_1=v_2=v_3=0$,
$v_{12}=v_{13}=v_{23}=1$ and $v_{N}=3/2$, we have a super-additive,
non-convex game with non-empty core.


\def\cprime{$'$}



\begin{thebibliography}{BDT05}

\bibitem[BDT05]{branzei-d-t:b-games}
Rodica Branzei, Dinko Dimitrov, and Stef Tijs.
\newblock {\em Models in cooperative game theory}, volume 556 of {\em Lecture
  Notes in Economics and Mathematical Systems}.
\newblock Springer-Verlag, Berlin, 2005.
\newblock Crisp, fuzzy, and multi-choice games.

\bibitem[Bon63]{bondareva:core}
O.~N. Bondareva.
\newblock Some applications of the methods of linear programming to the theory
  of cooperative games.
\newblock {\em Problemy Kibernet. No.}, 10:119--139, 1963.

\bibitem[Cur97]{curiel:b-games}
Imma Curiel.
\newblock {\em Cooperative game theory and applications}, volume~16 of {\em
  Theory and Decision Library. Series C: Game Theory, Mathematical Programming
  and Operations Research}.
\newblock Kluwer Academic Publishers, Boston, MA, 1997.
\newblock Cooperative games arising from combinatorial optimization problems.

\bibitem[Sha53]{shapley:value}
L.~S. Shapley.
\newblock A value for {$n$}-person games.
\newblock In {\em Contributions to the theory of games, vol. 2}, Annals of
  Mathematics Studies, no. 28, pages 307--317. Princeton University Press,
  Princeton, N. J., 1953.

\bibitem[Spr90]{sprumont:monotonic}
Yves Sprumont.
\newblock Population monotonic allocation schemes for cooperative games with
  transferable utility.
\newblock {\em Games Econom. Behav.}, 2(4):378--394, 1990.

\bibitem[Tij81]{tijs:value}
Stef~H. Tijs.
\newblock Bounds for the core and the $\tau$-value.
\newblock In O.~Moeschlin and D.~Pallaschke, editors, {\em Game Theory and
  Mathematical Economics}, pages 123--132. North Holland, Amsterdam, 1981.

\end{thebibliography}

\end{document}